\theoremstyle{plain}
 \newtheorem{theorem}{Theorem}
\newtheorem{lemma}{Lemma}
\DeclareMathOperator{\re}{Re}
\begin{document}

\title[The constant factor in the asymptotic for practical numbers]{The constant factor in the asymptotic \\ for practical numbers}

\begin{abstract}
An integer $n\ge 1$ is said to be practical if every natural number $m \le n$ can be expressed as a sum of distinct positive divisors of $n$.
The number of practical numbers up to $x$ is asymptotic to $c x/\log x$, where $c$ is a constant. In this note we show that $c=1.33607...$. 
\end{abstract}

\author{Andreas Weingartner}
\address{ 
Department of Mathematics,
351 West University Boulevard,
 Southern Utah University,
Cedar City, Utah 84720, USA}
\email{weingartner@suu.edu}
\subjclass[2010]{11N25, 11N37}
\maketitle

\section{Introduction}

Following Srinivasan \cite{Sri}, we call an integer $n\ge 1$ practical if every natural number $ m \le n$ can be expressed as a sum of distinct positive divisors of $n$. Let $P(x)$ be the number of practical numbers up to $x$.
Margenstern \cite{Mar} conjectured that $P(x)$ is asymptotic to $c x/\log x$ and gave the empirical estimate $c\approx 1.341$.
This conjecture was confirmed with the estimate \cite[Thm. 1.1]{PDD}
\begin{equation}\label{Pasymp}
P(x)=\frac{c x}{\log x} \left\{1+O\left( \frac{\log \log x}{\log x}  \right)\right\}.
\end{equation}
The constant factor $c$ is given by the sum of an infinite series \cite[Thm. 1]{CFAE},
\begin{equation}\label{pracon}
c= \frac{1}{1-e^{-\gamma}} \sum_{n\in \mathcal{P}} \frac{1}{n}  \Biggl( \sum_{p\le \sigma(n)+1}\frac{\log p}{p-1} - \log n\Biggr) \prod_{p\le \sigma(n)+1} \left(1-\frac{1}{p}\right),
\end{equation}
where $\gamma$ is Euler's constant, $\mathcal{P}$ is the set of practical numbers, $p$ runs over primes and $\sigma(n)$ is the sum of the positive divisors of $n$.
As a consequence, Corollary 1 of \cite{CFAE} states that  $1.311 < c <1.693$. 
The purpose of this note is to establish a more precise estimate for $c$. 
\begin{theorem}\label{thm1}
We have $1.336073<c<1.336077$.
\end{theorem}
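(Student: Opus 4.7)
The strategy is to evaluate \eqref{pracon} numerically with certified precision by splitting at a cutoff. Write
\[
c \;=\; \frac{1}{1-e^{-\gamma}}\bigl(S_N + T_N\bigr),
\]
where $S_N$ is the partial sum of \eqref{pracon} over practical $n\le N$ and $T_N$ is the tail, and choose $N$ so that an interval enclosure of $S_N$ together with a rigorous upper bound on $|T_N|$ localises $c$ to within $2\times 10^{-6}$.

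\textbf{Evaluating $S_N$.} Practical numbers up to $N$ can be enumerated by the standard recursive characterisation --- $n=p_1^{a_1}\cdots p_k^{a_k}$ with $p_1<\cdots<p_k$ is practical iff $p_{j+1}\le 1+\sigma(p_1^{a_1}\cdots p_j^{a_j})$ for every $0\le j<k$ --- so a depth-first tree traversal yields each $n$ together with its factorisation and $\sigma(n)$. For each such $n$ the bracket and the product in \eqref{pracon} would be evaluated from pre-computed cumulative tables of $y\mapsto\sum_{p\le y}(\log p)/(p-1)$ and $y\mapsto\prod_{p\le y}(1-1/p)$, indexed by the primes up to $\max_{n\le N}(\sigma(n)+1)$. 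All arithmetic would be performed in interval (or directed-rounding multi-precision) floats with enough guard digits to yield a certified enclosure of $S_N$.

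\textbf{Bounding $T_N$.} Effective forms of Mertens' theorems give pointwise upper bounds on the bracket and product, and an effective form of \eqref{Pasymp} from \cite{PDD} controls the distribution of practical numbers beyond $N$. Combining them produces an explicit upper bound for $|T_N|$.

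\textbf{Main obstacle.} The essential difficulty is that the crudest pointwise bound on the summand of \eqref{pracon} only gives size $\ll 1/(n\log n)$ for typical practical $n$, whence $|T_N|\asymp 1/\log N$, which would demand a prohibitively large $N$ to reach $10^{-6}$. A usable bound must exploit the cancellation that Mertens' theorems impose on the bracket times the product: writing $y=\sigma(n)+1$, since $\prod_{p\le y}(1-1/p)\sim e^{-\gamma}/\log y$ and $\sum_{p\le y}(\log p)/(p-1)\sim\log y$, the combined quantity has the form $\frac{e^{-\gamma}}{n\log y}(\log(y/n)+O(1))$, opening the door to extracting its leading asymptotic analytically --- for instance by subtracting off a model series with the same leading shape and summing it in closed form, or by iterating \eqref{Pasymp} in sharper form to produce a self-consistent equation for $c$ of the form $c(1-e^{-\gamma})=S_N+(\alpha c/\log N)+O(1/\log^{k}N)$ with $\alpha$ and $k$ explicit. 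Producing fully explicit effective error terms throughout the Mertens asymptotics and \eqref{Pasymp}, and then verifying that the resulting tail bound is small at a feasible $N$, is the bulk of the analytic work.
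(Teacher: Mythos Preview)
Your plan correctly isolates the crux: the naive tail is of size $\asymp 1/\log N$, so reaching $10^{-6}$ requires extracting the tail's leading behaviour rather than merely bounding it. But the two remedies you sketch do not close the gap. A self-consistent equation obtained by feeding \eqref{Pasymp} back into the tail inherits the relative error $O(\log\log x/\log x)$ of \eqref{Pasymp}; after partial summation the residual uncertainty in the tail is still of order $\log\log N/\log^2 N$, which at any feasible $N$ (even $N=2^{36}$) is several orders of magnitude too large. ``Subtracting a model series and summing it in closed form'' is the right instinct, but you have not said what series or how to sum it, and there is no evident closed form for $\sum_{n>N}\chi(n)\,g(n)$ for the relevant $g$ without a structural identity for $\mathcal P$.

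The paper supplies exactly such identities and thereby bypasses \eqref{Pasymp} entirely in the tail estimate. Writing the bracket in \eqref{pracon} as $\eta(\theta(n))+\log(\theta(n)/n)-\gamma$ with $\theta(n)=\sigma(n)+1$, the three pieces of the tail are handled as follows. The $-\gamma$ piece equals $-\gamma\varepsilon_N$, where $\varepsilon_N$ is \emph{computable} because Lemma~\ref{lem1} says the full weighted series equals $1$, so $\varepsilon_N=1-(\text{partial sum})$. The $\log(\sigma(n)/n)$ piece is likewise \emph{computed}, not bounded: the new identity of Lemma~\ref{lem2} (a $q^h\Vert n$ refinement of Lemma~\ref{lem1}) combined with the multiplicativity of $\sigma$ yields Lemma~\ref{lem3}, which rewrites $\sum_n \frac{\chi(n)}{n}\log(\sigma(n)/n)\prod_{p\le\theta(n)}(1-1/p)$ as $\sum_n \frac{\chi(n)}{n}\bigl(\sum_{q\le\theta(n)}W_q\bigr)\prod_{p\le\theta(n)}(1-1/p)$ with rapidly decaying, explicit $W_q<1/(q(q-1))$; the tail of this rewritten series is then $\varepsilon_N\sum_q W_q+O(\varepsilon_N/N)$, again computable. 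Only the $\eta(\theta(n))$ piece is genuinely bounded, by $\varepsilon_N\sup_{y\ge 2N}|\eta(y)|$, and Lemma~\ref{pslem} supplies the explicit $M_k$. The final gap is $2\varepsilon_N E(2N)+O(1/N)$, which at $N=2^{31}$ is about $3\times10^{-6}$. The idea you are missing is Lemma~\ref{lem2}: an exact sieve identity specific to sets of the $\mathcal B$-type that lets the slowly decaying part of the tail be evaluated rather than estimated.
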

While in \cite{CFAE} we  used the extremal behavior of $\sigma(n)$ to estimate the contribution to \eqref{pracon} from large $n$,
here we apply the new identity in Lemma \ref{lem2} together with the multiplicativity of $\sigma(n)$. 
As a result, the remaining gap in Theorem \ref{thm1} is almost entirely due to the error term of Lemma \ref{pslem} when estimating the inner sum over primes in  \eqref{pracon}.

\section{Lemmas}
As Lemmas \ref{lem1} and \ref{lem2} apply to other sets of numbers besides the practical numbers, we recall the general setup from 
\cite{SPA}.  

Let $\theta$ be an arithmetic function, $\theta: \mathbb{N}\to \mathbb{R}\cup \{\infty\}$.  We write $\mathcal{B}$  to denote the set of positive integers containing $n=1$ and all those $n \ge 2$ with prime factorization  $n=p_1^{\alpha_1} \cdots p_k^{\alpha_k}$, \
$p_1< p_2< \ldots < p_k$, which satisfy 
\begin{equation*}\label{Bdef}
p_{j+1} \le \theta (p_1^{\alpha_1}\cdots p_{j}^{\alpha_{j}} ) \qquad (0\le j < k),
\end{equation*}
where $p_1^{\alpha_1}\cdots p_{j}^{\alpha_{j}}$ 
is understood to be $1$ when $j=0$. 
As in \cite{SPA}, we assume that
\begin{equation}\label{theta}
\theta: \mathbb{N}\to \mathbb{R}\cup \{\infty\}, \quad \theta(1)\ge 2, \quad  \theta(n)\ge P^+(n) \quad (n\ge 2),
\end{equation}
where $P^+(n)$ denotes the largest prime factor of $n$. 
The assumptions in \eqref{theta} only eliminate the trivial case $\mathcal{B}=\{1\}$. 
Let $B(x)$ be the number of positive integers $n\le x$ in $\mathcal{B}$. We write $\chi(n)$ to denote the characteristic function of the set $\mathcal{B}$. 

Sierpinski \cite{Sier} and Stewart \cite{Stew} found that if $\theta(n)=\sigma(n)+1$, then $\mathcal{B}=\mathcal{P}$.

\begin{lemma}\label{lem1}
Let $\theta$ satisfy \eqref{theta}. For $\re(s)>1$ we have 
\begin{equation*}
 \sum_{n\ge 1} \frac{\chi(n)}{n^s} \prod_{p\le \theta(n)} \left( 1-\frac{1}{p^s}\right)=1.
\end{equation*}
If $B(x)=o(x)$, the equation also holds at $s=1$. 
\end{lemma}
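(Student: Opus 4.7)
The plan is to exploit a unique factorization associated with $\mathcal{B}$: every positive integer $m$ admits a unique decomposition $m = n \cdot r$ with $n \in \mathcal{B}$ and either $r = 1$ or $P^-(r) > \theta(n)$, where $P^-$ denotes the smallest prime factor. Coprimality of $n$ and $r$ is then automatic, since $P^+(n) \le \theta(n) < P^-(r)$ when $r > 1$. To prove existence, I would write $m = p_1^{a_1} \cdots p_k^{a_k}$ with $p_1 < \cdots < p_k$, set $n_i := p_1^{a_1} \cdots p_i^{a_i}$, and take $j$ to be the largest index with $n_j \in \mathcal{B}$; then $n := n_j$ and $r := m/n$ work, since maximality forces $p_{j+1} > \theta(n_j)$ whenever $j<k$. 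Uniqueness follows because the primes of $n$ must form an initial segment of those of $m$, with full exponents inherited from $m$, and any smaller choice of $j$ would force $n_{j+1} \in \mathcal{B}$, contradicting $P^-(r) > \theta(n_j)$.

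For $\re(s) > 1$, this decomposition regroups the absolutely convergent series for $\zeta(s)$ as
\begin{equation*}
\zeta(s) = \sum_{n \in \mathcal{B}} \frac{1}{n^s} \sum_{\substack{r\ge 1 \\ r=1\ \text{or}\ P^-(r)>\theta(n)}} \frac{1}{r^s} = \sum_{n \in \mathcal{B}} \frac{1}{n^s} \prod_{p > \theta(n)} \left(1 - \frac{1}{p^s}\right)^{-1}.
\end{equation*}
Dividing through by the Euler product $\zeta(s) = \prod_p (1 - p^{-s})^{-1}$ collapses the tail product into $\prod_{p \le \theta(n)}(1 - p^{-s})$, yielding the identity.

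For the statement at $s = 1$ under $B(x) = o(x)$, I would carry out the analogous counting argument: starting from $\lfloor x \rfloor = \sum_{n \in \mathcal{B},\, n \le x} |\{r \le x/n : P^-(r) > \theta(n)\}|$, replace the inner count in the range $\theta(n) \le x/n$ by a uniform Mertens-type sieve estimate of the shape $(x/n)\prod_{p \le \theta(n)}(1-1/p)+\text{error}$, and invoke the density hypothesis $B(x) = o(x)$ to show that the complementary contribution (where $n\theta(n) > x$, so the inner count is essentially one) is $o(x)$. Dividing by $x$ and letting $x \to \infty$ then gives the identity. The principal technical obstacle will be obtaining a sieve estimate sufficiently uniform in $\theta(n)$ so that the accumulated error, summed over $n \in \mathcal{B}$ with $n \le x$, remains $o(x)$; the hypothesis $B(x) = o(x)$ is exactly what renders the boundary sum negligible.
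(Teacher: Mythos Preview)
Your argument for $\re(s)>1$ is correct and is precisely the unique-factorization decomposition that underlies the cited Lemma~1 of \cite{CFAE}; the paper itself uses this same $m=nr$ factorization in its proof of Lemma~\ref{lem2}.

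For $s=1$ the paper defers entirely to \cite{SPA}, so you are supplying what the paper omits. Writing $\Phi(y,z)$ for the number of $r\le y$ with $r=1$ or $P^{-}(r)>z$, your identity $\lfloor x\rfloor=\sum_{n\in\mathcal{B},\,n\le x}\Phi(x/n,\theta(n))$ and the treatment of the range $n\theta(n)>x$ via $B(x)=o(x)$ are both sound. The gap is in the complementary range: the asymptotic $\Phi(y,z)\sim y\prod_{p\le z}(1-1/p)$ is \emph{not} uniform over $z\le y$ --- it fails by a bounded factor already when $z=y^{c}$ for fixed $0<c<1$ (for instance $\Phi(y,\sqrt{y})\sim y/\log y$ while $y\prod_{p\le\sqrt{y}}(1-1/p)\sim 2e^{-\gamma}y/\log y$). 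Thus your split at $n\theta(n)=x$ does not isolate a regime where the sieve is uniformly accurate, and $B(x)=o(x)$ alone does not repair this. A fix within your framework: split instead at $\theta(n)\le T$ versus $\theta(n)>T$ for a parameter $T$. For $\theta(n)\le T$ use inclusion--exclusion with error $O(2^{\pi(T)})$, summed over $T$-smooth $n$ (there are $o_{T}(x)$ of them). For $\theta(n)>T$ use only the upper-bound sieve $\Phi(x/n,\theta(n))\ll x/(n\log\theta(n))$, together with the $O(B(x))$ contribution from $n\theta(n)>x$; the point is that $\sum_{n\in\mathcal{B}}1/(n\log\theta(n))$ converges, being comparable via Mertens to the very series in the lemma, whose partial sums you have already bounded by $1$. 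Letting $x\to\infty$ and then $T\to\infty$ gives $L=1$. Alternatively, one can bypass counting and argue by right-continuity of the series at $s=1$, exactly as the paper does in its proof of Lemma~\ref{lem2}.
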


\begin{proof}
The case $\re(s)>1$ is \cite[Lemma 1]{CFAE}. The case $s=1$ is \cite[Theorem 1]{SPA}.
When $s=1$, the series actually converges for any $\mathcal{B}$ to a value $L\in [0,1]$ (see \cite[Lemma 4]{SPA}), with
$L=1$ if and only if $B(x)=o(x)$ (see \cite[Theorem 1]{SPA}).
\end{proof}

\begin{lemma}\label{lem2}
Let $\theta$ satisfy \eqref{theta}. Let $q$ be prime and $h\in \mathbb{N}$. For $\re(s)>1$ we have
\begin{equation*}
\sum_{n\ge 1 \atop q^h ||n} \frac{\chi(n)}{n^s} \prod_{p\le \theta(n)} \left( 1-\frac{1}{p^s}\right)
= \frac{1-1/q^s}{q^{s h}}\sum_{n\ge 1 \atop \theta(n)\ge q} \frac{\chi(n)}{n^s}  \prod_{p\le \theta(n)} \left( 1-\frac{1}{p^s}\right) .
\end{equation*}
If $B(x)=o(x)$, the equation also holds at $s=1$. 
\end{lemma}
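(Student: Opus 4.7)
The plan is to decompose each $n$ with $q^h\|n$ as $n = aq^h b$ with $P^+(a)<q<P^-(b)$ (allowing $a=1$ or $b=1$), so that $\chi(n)=1$ is equivalent to the three conditions $a\in\mathcal{B}$, $\theta(a)\ge q$, and $b\in\mathcal{B}^{>q}_{aq^h}$, the set of $b$ with $P^-(b)>q$ (or $b=1$) that satisfy $p_{j+1}\le\theta(aq^h p_1^{\beta_1}\cdots p_j^{\beta_j})$ throughout. Substituting yields
\[
\sum_{\substack{n\ge 1\\ q^h\|n}}\frac{\chi(n)}{n^s}\prod_{p\le\theta(n)}\left(1-\frac{1}{p^s}\right)
= \sum_{\substack{a\in\mathcal{B}\\ P^+(a)<q\le\theta(a)}}\frac{1}{(aq^h)^s}\,I_a,
\]
where $I_a := \sum_{b\in\mathcal{B}^{>q}_{aq^h}} b^{-s}\prod_{p\le\theta(aq^h b)}(1-p^{-s})$.

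To evaluate $I_a$ I apply Lemma \ref{lem1} to an auxiliary function $\tilde\theta$ defined by $\tilde\theta(1)=\theta(aq^h)$, $\tilde\theta(m)=\theta(aq^h m)$ for $m\ge 2$ with $P^-(m)>q$, and $\tilde\theta(m)=P^+(m)$ otherwise; one checks that $\tilde\theta$ satisfies \eqref{theta}. Tracing the chain condition shows $\mathcal{B}(\tilde\theta) = \mathcal{B}^{>q}_{aq^h}\cup\{p^\beta : p\le q,\ \beta\ge 1\}$: any chain that starts with a prime $p\le q$ must halt at once because $\tilde\theta(p^\beta)=p$ blocks a larger next prime. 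Applying Lemma \ref{lem1} to $\tilde\theta$ equates $1$ with $I_a$ plus the dead-end contribution
\[
\sum_{p\le q}\sum_{\beta\ge 1}\frac{1}{p^{\beta s}}\prod_{p'\le p}\left(1-\frac{1}{p'^s}\right)
= \sum_{p\le q}\frac{1}{p^s}\prod_{p'<p}\left(1-\frac{1}{p'^s}\right)
= 1-\prod_{p\le q}\left(1-\frac{1}{p^s}\right),
\]
where the last equality is by telescoping. Hence $I_a = \prod_{p\le q}(1-p^{-s})$, independent of $a$ and $h$.

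An identical argument evaluates the series on the right of the lemma by writing each summand's $n$ as $am$ with $P^+(a)<q\le P^-(m)$ (or $m=1$); the conditions reduce to $a\in\mathcal{B}$, $\theta(a)\ge q$, and an extension condition on $m$ whose primes are $\ge q$. A parallel modified Lemma \ref{lem1}, where the dead-ends are now prime powers of primes $p<q$, gives the inner sum $\prod_{p<q}(1-p^{-s})$, whence
\[
\sum_{\substack{n\ge 1\\ \theta(n)\ge q}}\frac{\chi(n)}{n^s}\prod_{p\le\theta(n)}\left(1-\frac{1}{p^s}\right)
= \prod_{p<q}\left(1-\frac{1}{p^s}\right)\sum_{\substack{a\in\mathcal{B}\\ P^+(a)<q\le\theta(a)}}\frac{1}{a^s}.
\]
Combining the two expressions via $\prod_{p\le q}(1-p^{-s}) = (1-q^{-s})\prod_{p<q}(1-p^{-s})$ produces the lemma. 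For $s=1$ under $B(x)=o(x)$, each auxiliary $\mathcal{B}(\tilde\theta)$ still has counting function $o(x)$---its non-dead-end part embeds into the multiples of $aq^h$ (resp.\ $a$) inside $\mathcal{B}$, and only $O(\log x)$ prime powers are added---so Lemma \ref{lem1} at $s=1$ remains applicable and the argument goes through unchanged. The main obstacle is the design of $\tilde\theta$: it must replicate the chain condition for primes above the cutoff while converting each sub-cutoff prime into an isolated dead-end whose Dirichlet contribution telescopes cleanly; granted the two inner-sum evaluations, everything else is formal.
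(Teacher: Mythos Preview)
Your proof is correct but follows a genuinely different route from the paper's.

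The paper argues globally: it uses the canonical factorization of every integer $m$ as $m=nr$ with $n\in\mathcal{B}$ and $P^-(r)>\theta(n)$, computes $\sum_{q^h\|m}m^{-s}$ both this way and directly as $(1-q^{-s})q^{-hs}\zeta(s)$, divides by $\zeta(s)$, and then invokes Lemma~\ref{lem1} to remove the $\theta(n)<q$ part. The case $s=1$ is handled by a separate right-continuity argument using the uniform estimate $\prod_{p\le\theta(n)}(1-p^{-s})\asymp (s-1)+1/\log\theta(n)$.

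Your approach is structural: you split each admissible $n$ at the prime $q$, reduce both sides to the common outer sum $\sum_{a\in\mathcal{B},\,P^+(a)<q\le\theta(a)}a^{-s}$, and evaluate the inner sums by applying Lemma~\ref{lem1} to tailor-made $\tilde\theta$'s whose sub-threshold primes become dead ends that telescope. This buys you a cleaner passage to $s=1$: since each auxiliary $\mathcal{B}(\tilde\theta)$ inherits $o(x)$ counting (its live part injects into $\{n\in\mathcal{B}: aq^h\mid n\}$ or $\{n\in\mathcal{B}: a\mid n\}$, and the dead ends number $O(\log x)$), Lemma~\ref{lem1} applies directly at $s=1$ with no continuity analysis; the outer sum over $q$-smooth $a$ is finite, so Tonelli handles the interchange. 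The price is the construction and verification of the $\tilde\theta$'s, which you have carried out correctly (including the point that $p=q$ is a dead end in the first auxiliary but not in the second, matching the products over $p\le q$ versus $p<q$).
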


\begin{proof}
We first assume $\re(s)>1$.
 Each natural number $m$ with prime factorization $m=p_1^{\alpha_1}p_2^{\alpha_2} \cdots p_k^{\alpha_k}$, $p_1<p_2<\ldots < p_k$, factors uniquely as $m=nr$, where 
 $n =p_1^{\alpha_1}p_2^{\alpha_2}\cdots p_j^{\alpha_j} \in \mathcal{B}$ and $p_{j+1}>\theta(n)$ for some $j$ with $0\le j <k$, 
 or $m=n \in \mathcal{B}$ and $r=1$.  
 If $q^h || m$ then either $q^h || n$ or $q^h || r$. Accordingly
 $$ \sum_{m\ge 1 \atop q^h || m} \frac{1}{m^s}
  = \sum_{n\ge 1 \atop q^h || n} \frac{\chi(n)}{n^s} \prod_{p> \theta(n)} \left( 1-\frac{1}{p^s}\right)^{-1}
  + \sum_{n\ge 1 \atop \theta(n)<q} \frac{\chi(n)}{n^s} \frac{1}{q^{hs}} \prod_{p> \theta(n) \atop p\neq q} \left( 1-\frac{1}{p^s}\right)^{-1} .$$
On the other hand,
$$  \sum_{m\ge 1 \atop q^h || m} \frac{1}{m^s} 
= \frac{1}{q^{hs}}  \prod_{p\neq q} \left( 1-\frac{1}{p^s}\right)^{-1}
= \frac{1-1/q^s}{q^{hs}}  \prod_{p\ge 2} \left( 1-\frac{1}{p^s}\right)^{-1}. $$
  Setting the right-hand sides equal and dividing by $\zeta(s)= \prod_{p\ge 2} \left(1-p^{-s}\right)^{-1}$ yields
$$
 \frac{1-1/q^s}{q^{hs}} =  \sum_{n\ge 1 \atop q^h || n} \frac{\chi(n)}{n^s} \prod_{p\le \theta(n)} \left( 1-\frac{1}{p^s}\right)
 + \frac{1-1/q^s}{q^{hs}} \sum_{n\ge 1 \atop \theta(n)<q} \frac{\chi(n)}{n^s}  \prod_{p\le \theta(n) } \left( 1-\frac{1}{p^s}\right).
$$
For $\re(s)>1$, the result follows now from Lemma \ref{lem1}. 
To show that the last equation, and hence Lemma \ref{lem2}, also holds at $s=1$ if $B(x)=o(x)$, it suffices to show that 
the last two sums are continuous from the right at $s=1$. 
We have
$$  \prod_{p\le \theta(n)} \left( 1-\frac{1}{p^s}\right) \asymp s-1 +\frac{1}{\log \theta(n)},$$
uniformly for $n\ge 1$ and $1\le s \le 2$, by \cite[Eq. (24)]{CFAE}.
Thus, uniformly for $1\le s \le 2$, 
\begin{multline*}
 \sum_{n>N} \frac{\chi(n)}{n^s} \prod_{p\le \theta(n)} \left( 1-\frac{1}{p^s}\right) 
\ll \sum_{n>N} \frac{\chi(n)}{n^s}(s-1)+  \sum_{n>N} \frac{\chi(n)}{n^s}\frac{1}{\log \theta(n)} \\
=G_N(s)+H_N(s),
\end{multline*}
say. Since $B(x)=o(x)$, partial summation shows that $G_N(s)=o(1)$ as $N\to \infty$, uniformly for $1\le s \le 2$.
We have $H_N(s)\le H_N(1)=o(1)$ as $N\to \infty$, since the series in Lemma \ref{lem1} converges when $s=1$.  
It follows that $G_N(s)+H_N(s)=o(1)$ as $N\to \infty$, uniformly for $1\le s \le 2$, which concludes the proof. 
\end{proof}

\begin{lemma}\label{lem3}
Let $\theta(n)=\sigma(n)+1$. We have 
$$  \sum_{n \ge 1} \frac{\chi(n)}{n}\log\left(\frac{\sigma(n)}{n}\right) \prod_{p\le \theta(n)} \left(1-\frac{1}{p}\right) 
=\sum_{n \ge 1} \frac{\chi(n)}{n} \sum_{q\le \theta(n)} W_q \prod_{p\le \theta(n)} \left(1-\frac{1}{p}\right),
$$
where $p$ and $q$ run over primes and 
$$0< W_q := \sum_{h\ge 1}  \frac{1-1/q}{q^{h}} \log\left(\frac{1-1/q^{h+1}}{1-1/q}\right)<\frac{1}{q(q-1)}.
$$
\end{lemma}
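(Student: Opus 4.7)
The strategy is to reduce the identity to a prime-by-prime application of Lemma~\ref{lem2}. Multiplicativity of $\sigma(n)/n$ together with $\sigma(q^h)/q^h = (1-1/q^{h+1})/(1-1/q)$ gives
\[
\log\!\left(\frac{\sigma(n)}{n}\right) = \sum_{\substack{q^h || n \\ h \ge 1}} \log\!\left(\frac{1-1/q^{h+1}}{1-1/q}\right),
\]
where the sum runs over prime powers exactly dividing $n$. Substituting into the left-hand side and interchanging the order of summation---permissible by Tonelli since every factor is nonnegative---the LHS becomes
\[
\sum_{q \text{ prime}} \sum_{h \ge 1} \log\!\left(\frac{1-1/q^{h+1}}{1-1/q}\right) \sum_{\substack{n \ge 1 \\ q^h || n}} \frac{\chi(n)}{n}\prod_{p\le \theta(n)}\!\left(1-\frac{1}{p}\right).
\]

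Since $P(x) = o(x)$ by \eqref{Pasymp}, Lemma~\ref{lem2} applies at $s=1$ and converts each innermost sum into $\frac{1-1/q}{q^h}\sum_{n:\,\theta(n)\ge q}\frac{\chi(n)}{n}\prod_{p\le \theta(n)}(1-1/p)$. Pulling the $n$-sum out of the $h$-sum exposes the coefficient
\[
\sum_{h \ge 1} \frac{1-1/q}{q^h}\log\!\left(\frac{1-1/q^{h+1}}{1-1/q}\right) = W_q,
\]
so the LHS equals $\sum_q W_q \sum_{n:\, q \le \theta(n)} \frac{\chi(n)}{n}\prod_{p\le \theta(n)}(1-1/p)$. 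A final Tonelli interchange of the $q$- and $n$-sums, using that $\theta(n) \ge q$ is the same as $q \le \theta(n)$, produces the right-hand side.

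The bounds on $W_q$ are short. Each logarithm is strictly positive because $\sigma(q^h)/q^h > 1$ for $h \ge 1$, so $W_q > 0$. For the upper bound, a direct computation gives
\[
\frac{1-1/q^{h+1}}{1-1/q} - 1 = \frac{1-1/q^h}{q-1},
\]
so the estimate $\log(1+x) < x$ yields $W_q < \frac{1}{q-1}\sum_{h\ge 1}\frac{1-1/q}{q^h} = \frac{1}{q(q-1)}$. I anticipate no serious obstacle; the only care point is justifying the two summation interchanges and the application of Lemma~\ref{lem2} at $s=1$, all of which are routine once one observes that every quantity in sight is nonnegative.
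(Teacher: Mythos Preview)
Your argument is correct and follows essentially the same route as the paper: expand $\log(\sigma(n)/n)$ multiplicatively over prime powers $q^h\Vert n$, interchange summations, apply Lemma~\ref{lem2} at $s=1$, regroup to recognize $W_q$, and interchange once more. The only cosmetic difference is in the upper bound for $W_q$: the paper bounds $1-1/q^{h+1}<1$ to get $W_q<-\log(1-1/q)/q<1/(q(q-1))$, whereas you compute the ratio minus $1$ exactly and apply $\log(1+x)<x$; both reach the same bound.
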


\begin{proof}
The multiplicativity of $\sigma(n)$ and Lemma \ref{lem2} yield
\begin{equation*}
\begin{split}
&  \sum_{n \ge 1} \frac{\chi(n)}{n}\log\left(\frac{\sigma(n)}{n}\right) \prod_{p\le \theta(n)} \left(1-\frac{1}{p}\right) \\
 = &   \sum_{n \ge 1} \frac{\chi(n)}{n} \sum_{q^h || n} \log\left(1+\frac{1}{q}+\cdots +\frac{1}{q^h}\right) \prod_{p\le \theta(n)} \left(1-\frac{1}{p}\right) \\
=  & \sum_{q\ge 2 \atop h\ge 1} \log\left(\frac{1-1/q^{h+1}}{1-1/q}\right)
\sum_{n\ge 1 \atop q^h ||n} \frac{\chi(n)}{n} \prod_{p\le \theta(n)} \left( 1-\frac{1}{p}\right) \\
= &  \sum_{q\ge 2 \atop h\ge 1} \log\left(\frac{1-1/q^{h+1}}{1-1/q}\right)
 \frac{1-1/q}{q^{h}}\sum_{n\ge 1 \atop \theta(n)\ge q} \frac{\chi(n)}{n}  \prod_{p\le \theta(n)} \left( 1-\frac{1}{p}\right) \\
= & \sum_{q\ge 2} W_q \sum_{n\ge 1 \atop \theta(n)\ge q} \frac{\chi(n)}{n}  \prod_{p\le \theta(n)} \left( 1-\frac{1}{p}\right)\\
= & \sum_{n \ge 1} \frac{\chi(n)}{n} \sum_{q\le \theta(n)} W_q \prod_{p\le \theta(n)} \left(1-\frac{1}{p}\right).
\end{split}
\end{equation*}
The convergence of these series follows from \eqref{Pasymp}.
We have
$$ 0<W_q < \sum_{h\ge 1}  \frac{1-1/q}{q^{h}} \log\left(\frac{1}{1-1/q}\right)=-\frac{\log(1-1/q)}{q} < \frac{1}{q(q-1)} .$$
\end{proof}

\begin{lemma}\label{pslem}
Let
\begin{equation}\label{etadef}
 \eta(x) = \sum_{p\le x} \frac{\log p}{p-1}  - \log x + \gamma 
\end{equation}
and 
\begin{equation}\label{deltadef}
 \delta(x) = \sum_{p\le x} \frac{\log p}{p}  - \log x + \gamma + \sum_{p\ge 2} \frac{\log p}{p(p-1)} = \eta(x) +   \sum_{p>x} \frac{\log p}{p(p-1)}
\end{equation}
We have 
$$ |\eta(x)| \le M_k, \qquad |\delta(x)| \le M_k \qquad (x\ge 2^k),$$
where $M_k$ is given by 
\begin{table}[h]\label{table1}
 \begin{tabular}{ | c | c | }
    \hline
    $\ k \ $ & $M_k \times 10^{5}$ \\ \hline
    $24$ & $36.80 $ \\ \hline
    $25$ &  $27.65$ \\ \hline
    $26$ & $17.60 $ \\ \hline
    $27$ & $13.04 $ \\ \hline
    $28$ &  $8.173 $  \\ \hline
     \end{tabular}
     \qquad
  \begin{tabular}{ | c | c | }
    \hline
    $\ k \ $ & $M_k \times 10^{5}$ \\ \hline
    $29$ &  $6.377 $  \\ \hline
    $30$ &  $5.122$ \\ \hline
     $31$ & $3.143 $ \\ \hline
    $32$ & $2.174 $  \\ \hline
     $33$ & $1.654$  \\ \hline
     \end{tabular}
     \qquad
      \begin{tabular}{ | c | c | }
    \hline
    $\ k \ $ & $M_k \times 10^{5}$ \\ \hline
    $34$ & $1.101$  \\ \hline
    $35$ & $0.833$  \\ \hline
    $36$ & $0.569$  \\ \hline
    $37$ & $0.438$  \\ \hline
    $38$ & $0.305$  \\ \hline
     \end{tabular}
        \medskip
     \caption{The values of $M_k$ are best possible apart from rounding. }\label{table1}
\end{table}
\end{lemma}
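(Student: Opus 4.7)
The plan is to reduce the bounds on $\eta$ and $\delta$ to explicit estimates for the Chebyshev error $E(t) := \theta(t)-t$, where $\theta(t) = \sum_{p\le t}\log p$, and then combine these with a direct prime-sieve computation over an initial finite range.

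First I would derive a clean integral representation of $\delta(x)$. Writing $\sum_{p\le x}\log p/p = \int_{2^-}^x t^{-1}\,d\theta(t)$ and integrating by parts gives
\[
\sum_{p\le x}\frac{\log p}{p} = 1 + \frac{E(x)}{x} + \log(x/2) + \int_2^x \frac{E(t)}{t^2}\,dt.
\]
Letting $x\to\infty$ and matching the constant against Mertens' identity $\sum_{p\le x}\log p/p = \log x - \gamma - \sum_p\log p/(p(p-1)) + o(1)$ evaluates $\int_2^\infty E(t)/t^2\,dt$ and rearranges to
\[
\delta(x) = \frac{E(x)}{x} - \int_x^\infty \frac{E(t)}{t^2}\,dt.
\]
Together with the identity $\eta(x) = \delta(x) - \sum_{p>x}\log p/(p(p-1))$ already recorded in \eqref{deltadef}, and the trivial bound $\sum_{p>x}\log p/(p(p-1)) \ll (\log x)/x$ on the tail sum, it suffices to control $|\delta(x)|$ effectively.

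For the asymptotic regime I would invoke a modern explicit estimate of the form $|E(t)| \le c\,t/\log^r t$ valid for $t \ge T_0$ (Dusart, Platt--Trudgian, and B\"uthe have produced several such bounds with various choices of $(c,r)$). Substituting into the identity for $\delta$ yields $|\delta(x)| \le c/\log^r x + c/((r-1)\log^{r-1}x)$, which for every $k$ in the table drops below $M_k$ once $x$ exceeds some computable threshold $T_k$. For the remaining range $x\in[2^k,T_k]$, both $\delta$ and $\eta$ are staircase functions that decrease strictly between consecutive primes and jump upward at each prime, so their suprema are attained just before or just after primes. Verifying $|\delta|,|\eta|\le M_k$ therefore reduces to a sieve computation of the partial sums $\sum_{p\le p_n}\log p/p$ and $\sum_{p\le p_n}\log p/(p-1)$ at each prime $p_n\in[2^k,T_k]$; the asserted sharpness of $M_k$ corresponds to pinpointing the prime at which the extremum is realised.

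The main obstacle is calibration. Because the tabulated $M_k$ are claimed to be best possible apart from rounding, $T_k$ must be small enough that the sieve step is computationally feasible, and simultaneously large enough that the tail bound actually lies below $M_k$. Meeting both constraints for $k$ as large as $38$ (where $M_k\sim 3\times 10^{-6}$) requires using the sharpest currently available explicit estimates for $E(t)$, which in turn rely on the computational verification of the Riemann Hypothesis to substantial height; once those are in hand the remaining finite verification, though large, is routine.
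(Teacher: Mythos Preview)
Your approach is essentially the same as the paper's: reduce $\delta$ to $E(t)=\vartheta(t)-t$ via the integral identity (the paper cites the equivalent difference form $\delta(y)-\delta(x)=E(y)/y-E(x)/x+\int_x^y E(t)/t^2\,dt$ from Rosser--Schoenfeld), combine explicit Chebyshev-error bounds in the tail with direct computation over a finite initial interval, and pass from $\delta$ to $\eta$ via the tail sum $\sum_{p>x}\log p/(p(p-1))<(1+\log x)/x$.

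The one place where the paper is more specific, and where your sketch would run into the very obstacle you flag at the end, is the calibration. A single bound of the shape $|E(t)|\le c\,t/\log^r t$ is not nearly strong enough to bring $T_k$ down to sieve-able size: the resulting $|\delta(y)|\lesssim c/\log^{r-1}y$ only drops below $M_{38}\approx 3\times 10^{-6}$ for $y$ well beyond $e^{150}$. The paper avoids this by splitting the tail into three pieces. For $2^{39}\le y\le 10^{19}$ it uses B\"uthe's computationally verified bound $-1.95\sqrt{y}\le \vartheta(y)-y<-0.05\sqrt{y}$, which via the difference identity controls $\delta(y)-\delta(2^{39})$ at the level of $2^{-39/2}$; for $10^{19}\le y\le e^{600}$ it chains Dusart's tabulated $|\psi(y)-y|\le\epsilon_i y$ bounds (combined with $|\psi-\vartheta|<1.4262\sqrt{y}$) over successive sub-ranges to get $|\delta(y)-\delta(10^{19})|\le 2.63\times 10^{-7}$; and only for $y\ge e^{600}$ does it invoke an Axler-type bound on $\delta$ directly. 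With this decomposition the direct sieve only has to cover $[2^{38},2^{39}]$. Your plan goes through once you replace the single asymptotic bound by this three-tier splitting; the $O(\sqrt{y})$ B\"uthe input on the middle range is the ingredient that makes the finite verification feasible.
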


\begin{proof}
Let $\vartheta(x) = \sum_{p\le x} \log p$ and $\psi(x)=\sum_{p^j\le x} \log p$. 
By Eq. (4.21) of Rosser and Schoenfeld \cite{RS}, we have
\begin{equation}\label{deltadiff}
\delta(y)-\delta(x) = \frac{\vartheta(y)-y}{y}-\frac{\vartheta(x)-x}{x} + \int_x^y \frac{\vartheta(t)-t}{t^2} dt.
\end{equation}
B\"{u}the \cite[Thm. 2]{But} showed that $-1.95 \sqrt{x} \le \vartheta(x)-x <-0.05 \sqrt{x}$ for $1423\le x \le 10^{19}$. 
Together with \eqref{deltadiff} we get, for $1423\le x \le y \le 10^{19}$,
\begin{equation}\label{ineq1}
  \delta(x) + \frac{x-\vartheta(x)}{x} -\frac{3.9}{\sqrt{x}} \le \delta(y) \le  \delta(x) + \frac{x-\vartheta(x)}{x} -\frac{0.05}{\sqrt{x}}.
\end{equation}

Rosser and Schoenfeld \cite[Thm. 13]{RS} found that $|\psi(x)-\vartheta(x)|<1.4262 \sqrt{x}$ for all $x>0$. 
Dusart \cite[Prop. 3.2]{Dus} gives inequalities of the form $|\psi(x)-x|\le \epsilon_i x$ for $x\ge e^{b_i}$. 
Hence
$$ \left|\frac{\vartheta(x)-x}{x}\right| \le  \frac{1.4262}{e^{b_i/2}} + \epsilon_i \qquad (x \ge e^{b_i}),$$
where the pairs $(b_i,\epsilon_i)$ take the values $(\log (10^{19}),1.161\times 10^{-7})$, $(45,1.225\times 10^{-8})$,$\ldots$, $(500,1.215\times 10^{-11})$,
according to \cite[Table 1]{Dus}. 
Together with \eqref{deltadiff}, we find that
\begin{equation}\label{ineq2}
 |\delta(y)-\delta(x)|\le 2.630 \times 10^{-7} \qquad (10^{19}=x \le y \le e^{600}).
\end{equation}
Finally, for $y\ge e^{600}$ we use Proposition 8 from Axler \cite{Axler}:
\begin{equation}\label{ineq3}
  |\delta(y)| \le \frac{3}{40 \log^2 y} + \frac{3}{20 \log^3 y} \le 2.091 \times 10^{-7}, \qquad (y\ge e^{600}).
\end{equation}
We now show how these inequalities give rise to Table \ref{table1}. We only need to verify the case $k=38$. The other values of $M_k$ in Table \ref{table1} follow by computer calculation. With the computer we verify that 
$0<\eta(x) < \delta(x) < M_{38}=3.05\times 10^{-6}$ for $2^{38}\le x \le 2^{39}$. For $x=2^{39}$, we calculate $\delta(x)$ and $\vartheta(x)$
and use \eqref{ineq1} to obtain
$$ |\delta(y)|\le 2.652 \times 10^{-6}\qquad (2^{39} \le y \le 10^{19}).$$
With \eqref{ineq2} this implies
$$ |\delta(y)| \le 2.652 \times 10^{-6} + 0.263 \times 10^{-6} = 2.915 \times 10^{-6} \qquad (2^{39} \le y \le e^{600}).$$
Together with \eqref{ineq3} this shows that $|\delta(x)|\le M_{38}$ for $x\ge 2^{38}$. To show that $\eta(x)$ satisfies the same inequality, note that
\begin{equation}\label{deleta}
0< \delta(x)-\eta(x) =   \sum_{p>x} \frac{\log p}{p(p-1)} < \int_x^\infty \frac{\log t}{t^2} dt = \frac{1+\log x}{x} < 10^{-10}
\end{equation}
for $x\ge 2^{39}$. 
\end{proof}

\section{Proof of Theorem \ref{thm1}}
Throughout this section we assume that $\theta(n)=\sigma(n)+1$, so that $\mathcal{B}=\mathcal{P}$ and $\chi(n)$ is the 
characteristic function of the set of practical numbers. 
We need to estimate $\alpha = (1-e^{-\gamma}) c  = \lim_{N\to \infty} \alpha_N,$ where 
 \begin{equation}\label{aNdef}
 \alpha_N =   \sum_{n\le N} \frac{\chi(n)}{n}\left(\sum_{p\le \theta(n)}\frac{\log p}{p-1} - \log n\right)
\prod_{p\le \theta(n)} \left(1-\frac{1}{p}\right).
\end{equation}
We write $\alpha = \alpha_N + \beta_N$, with $\alpha_N$ being computable by \eqref{aNdef}. To estimate $\beta_N$, note that \eqref{etadef}
implies
\begin{equation}\label{bNdef} \beta_N = \sum_{n > N} \frac{\chi(n)}{n}\Bigl( \eta(\theta(n)) +\log(\theta(n)/n) -\gamma \Bigr)
\prod_{p\le \theta(n)} \left(1-\frac{1}{p}\right).
\end{equation}
Let
$$  \varepsilon_N = \sum_{n>N}  \frac{\chi(n)}{n}\prod_{p\le \theta(n)} \left(1-\frac{1}{p}\right) 
=1 - \sum_{n\le N}  \frac{\chi(n)}{n}\prod_{p\le \theta(n)} \left(1-\frac{1}{p}\right),$$
by Lemma \ref{lem1}.
The last equation allows us to calculate $\varepsilon_N$ on a computer.
The contribution from $-\gamma$ to \eqref{bNdef} is $-\gamma  \varepsilon_N $.
If $n$ is practical, then $n-1$ can be written as the sum of some proper divisors of $n$, so $\sigma(n)-n\ge n-1$ and 
$\theta(n)=\sigma(n)+1\ge 2n$. 
The contribution from $\eta(\theta(n))$ to \eqref{bNdef} in absolute value is therefore at most
$$ \sum_{n > N} \frac{\chi(n)}{n}| \eta(\theta(n))| 
\prod_{p\le \theta(n)} \left(1-\frac{1}{p}\right) \le E(2N) \varepsilon_N,$$
where
$$ E(x) := \sup_{y\ge x}|\eta(y)|.$$
It remains to estimate the contribution from $\log(\theta(n)/n)$ to \eqref{bNdef}, that is
$$ \sum_{n > N} \frac{\chi(n)}{n}\log\left(\frac{\sigma(n)+1}{n}\right)
\prod_{p\le \theta(n)} \left(1-\frac{1}{p}\right) = T_N +D_N,$$
where
$$ T_N :=  \sum_{n > N} \frac{\chi(n)}{n}\log(\sigma(n)/n)
\prod_{p\le \theta(n)} \left(1-\frac{1}{p}\right)$$
and
$$ 0< D_N =   \sum_{n > N} \frac{\chi(n)}{n}\log(1+1/\sigma(n))
\prod_{p\le \theta(n)} \left(1-\frac{1}{p}\right) < \frac{ \varepsilon_N}{2N},$$
since $0< \log(1+1/\sigma(n)) <  1/\sigma(n) \le 1/(2n-1) < 1/(2N)$. 
Combining these estimates, we have 
\begin{equation}\label{alphabounds}
 \alpha_N + T_N + \varepsilon_N (-\gamma  - E(2N)) < \alpha < \alpha_N + T_N + \varepsilon_N (-\gamma +1/(2N) + E(2N)).
\end{equation}
To compute $T_N$ we write $T_N=A - U_N$, where
$$ U_N =  \sum_{n \le N} \frac{\chi(n)}{n}\log(\sigma(n)/n)
\prod_{p\le \theta(n)} \left(1-\frac{1}{p}\right)$$
can be calculated on a computer and 
$$ A = \sum_{n \ge 1} \frac{\chi(n)}{n} \sum_{q\le \theta(n)} W_q \prod_{p\le \theta(n)} \left(1-\frac{1}{p}\right),$$
by Lemma \ref{lem3}.
We write
$$ A= \sum_{1 \le n \le N} + \sum_{n>N} =A_N +V_N,$$
say. For practical $n>N$ we have $\theta(n)>2N$, so  
$$  \sum_{q\le 2N} W_q \le  \sum_{q\le \theta(n)} W_q <  \sum_{q\ge 2} W_q  
<  \sum_{q\le 2N} W_q + \sum_{q>2N} \frac{1}{q(q-1)}<  \sum_{q\le 2N} W_q + \frac{1}{2N},$$
by Lemma \ref{lem3}.
Thus
\begin{equation}\label{Vbounds}
\varepsilon_N \sum_{q\le 2N} W_q < V_N < \varepsilon_N \left(\sum_{q\le 2N} W_q + \frac{1}{2N} \right).
\end{equation}
To calculate $W_q$ efficiently, we write
\begin{equation*}
\begin{split}
  W_q &=\frac{-\log(1-1/q)}{q}+(q-1)\sum_{h\ge 1} \frac{\log(1-1/q^{h+1})}{q^{h+1}} \\
& =\frac{-\log(1-1/q)}{q}-(q-1)\sum_{h\ge 1}\frac{1}{q^{h+1}} \sum_{j\ge 1} \frac{1}{j}\left(\frac{1}{q^{h+1}}\right)^j \\
& =\frac{-\log(1-1/q)}{q}-\sum_{j\ge 1} \frac{q-1}{j q^{j+1} (q^{j+1}-1)}\\
& =\frac{-\log(1-1/q)}{q}-\sum_{1\le j \le J} \frac{q-1}{j q^{j+1} (q^{j+1}-1)} - R_{q,J}=W_{q,J}-R_{q,J},\\
\end{split}
\end{equation*}
say, where 
$$0 < R_{q,J} < \frac{1}{J+1}\sum_{ j > J} \frac{q-1}{ q^{j+1} (q^{j+1}-1)} < \frac{1}{J}\sum_{ j > J} \frac{q-1}{ q^{2j+2}}
= \frac{1}{Jq^{2J+2} (q+1)}.
$$
Thus
$$  \sum_{q\ge 2} R_{q,J} 
< \sum_{q\ge 2}\frac{1}{Jq^{2J+2} (q+1)}
<\frac{1}{3J} \left(\frac{1}{2^{2J+2}}+\int_2^\infty x^{-2J-2}dx \right)
< \frac{1}{J2^{2J+3}}, $$
for $J\ge 2$.
Replacing $W_q$ by $W_{q,J} > W_q$ in \eqref{Vbounds} yields
\begin{equation}\label{Vbounds2}
\varepsilon_N \left( Y_{2N,J} - \frac{1}{J2^{2J+3}}\right) < V_N < \varepsilon_N \left( Y_{2N,J}+ \frac{1}{2N} \right),
\end{equation}
where
$$ Y_{2N,J}=\sum_{q\le 2N} W_{q,J} .$$
Replacing $W_q$ by $W_{q,J}$ in $A_N$ gives
\begin{equation}\label{ANbounds}
   A_{N,J}-\frac{1-\varepsilon_N}{J2^{2J+3}}< A_N < A_{N,J},
\end{equation}
where
$$A_{N,J}=\sum_{n \le N } \frac{\chi(n)}{n} \sum_{q\le \theta(n)} W_{q,J} \prod_{p\le \theta(n)} \left(1-\frac{1}{p}\right).$$
Since $T_N=A-U_N=A_N+V_N-U_N$, combining \eqref{Vbounds2} and \eqref{ANbounds} with \eqref{alphabounds} yields
the lower bound 
$$
\alpha > \alpha_N + A_{N,J}-U_N -\frac{1}{J2^{2J+3}}  + \varepsilon_N \Bigl( Y_{2N,J}  -\gamma  - E(2N)\Bigr) 
$$
and the upper bound
$$
\alpha <  \alpha_N + A_{N,J}-U_N  + \varepsilon_N \left(Y_{2N,J}+ \frac{1}{N} -\gamma  + E(2N)\right).
$$
We let $J=13$ and $N=2^{31}$, so that $E(2N)\le M_{32}=2.174 \times 10^{-5}$ by Lemma \ref{pslem}.
Dividing both bounds for $\alpha$ by $ 1-e^{-\gamma}$, we get 
\begin{equation}\label{cint}
 1.33607322< c < 1.33607654.
\end{equation}

\section{Discussion}

Without precomputing the products and sums over primes, calculating $\alpha_N$, $\varepsilon_N$, $A_{N,J}$ and $U_N$ would take
$N^{2+o(1)}$ steps. To avoid this, we make a table with the practical numbers $n \le N$ in the first column and 
the values of $\theta(n)=\sigma(n)+1$ in the second column. We then sort the table according to $\theta(n)$. 
Next, we compute the three quantities 
$$   \prod_{p\le \theta(n)} \left( 1-\frac{1}{p}\right), \quad \sum_{p\le \theta(n)}\frac{\log p}{p-1}, \quad \sum_{q\le \theta(n)} W_{q,J},$$
recursively, for increasing values of $\theta(n)$, 
and store these values in columns 3 through 5 of our table. Finally, we sort the entire table according to $n$ in the first column.
Creating this table takes $N^{1+o(1)}$ steps and $N^{1+o(1)}$ bytes of memory. Calculating  $\alpha_N$, $\varepsilon_N$, $A_{N,J}$ and $U_N$,
with the use of this table, requires $N^{1+o(1)}$ steps.

With $J=13$ and $N=2^{31}$, the calculations took just over thirteen hours on a computer with sixteen gigabytes of RAM. 
Increasing $N$ further would require more memory, because of the large table. 

Note that the gap between the upper and lower bound for $\alpha$ is 
$$2\varepsilon_N E(2N) + \frac{\varepsilon_N}{N} + \frac{1}{J 2^{2J+3}} = 2\varepsilon_N E(2N) +O(1/N),$$
if $J 2^{2J+3}\ge N$. It follows from \eqref{Pasymp} that $\varepsilon_N \sim c e^{-\gamma}/\log N$. 
After dividing by $ 1-e^{-\gamma}$, the gap for $c$ is asymptotic to 
\begin{equation}\label{gapasymp}
  \frac{2c e^{-\gamma} E(2N)}{(1-e^{-\gamma})\log N}. 
\end{equation}

The width of the interval in \eqref{cint} is $3.32\times 10^{-6}$. If we increase $N$
from $2^{31}$ to $2^{36}$, Lemma \ref{pslem} would allow us to replace $M_{32}=2.174\times 10^{-5}$ by $M_{37}=0.438 \times 10^{-5}$ as an upper bound for $E(2N)$. With \eqref{gapasymp}, we expect the width of the interval for $c$ to be about $0.6\times 10^{-6}$ when $N=2^{36}$, while our algorithm would require about $2^5=32$ times as much memory and computing time compared to $N=2^{31}$.

Assuming the Riemann hypothesis, we have \cite[Lemma 13]{CFAE}
\begin{equation}\label{RH}
E(x) = \sup_{y\ge x}|\eta(y)| \le \frac{\log^2 x}{7\sqrt{x}} \qquad (x\ge 25),
\end{equation}
so that the gap for $c$ is $O( \log N / \sqrt{N})$ by \eqref{gapasymp}. However, the upper bounds listed in Table \ref{table1}, which are best possible, 
are significantly better than \eqref{RH}. 

Without the Riemann hypothesis, combining the best known error term in the prime number theorem with \eqref{deltadiff}, \eqref{deleta} and \eqref{gapasymp}, we find that the gap for $c$ is $O\left( \exp\left\{-(\log N)^{3/5-\varepsilon}\right\}\right)$, for every $\varepsilon >0$. 

\section*{Acknowledgments}
The author is grateful to the anonymous referee for several very helpful suggestions, 
to Jianlong Han for the use of his computer to complete the calculations,
and to Maurice Margenstern and Eric Saias for their comments after reading an earlier version of this paper.

\end{document}